\newcommand{\F}{\mbox{$\cal F$}}
\newtheorem{theorem}{Theorem}[section]
\newtheorem{lemma}[theorem]{Lemma}
\newtheorem{Definition}[theorem]{Definition}
\newtheorem{cor}[theorem]{Corollary}
\begin{document}
\title
{\LARGE \textbf{On the permanental nullity and  matching number of graphs\thanks{This work is supported by NSFC (11371180) and a project of QHMU(2015XZJ12).} }}

\author{ Tingzeng Wu$^{a}$\thanks{
\emph{E-mail address}: mathtzwu@163.com, hjlai@math.wvu.edu}, Hong-Jian Lai$^{b}$\\
{\small $^{a}$School of Mathematics and Statistics, Qinghai Nationalities University, }\\
{\small  Xining, Qinghai 810007, P.R.~China}\\
{\small $^{b}$Department of Mathematics, West Virginia University,}\\
{\small  Morgantown, WV, USA}}

\date{}

\maketitle

\noindent {\bf Abstract:}\ \  For a graph $G$ with $n$ vertices, let $\nu(G)$
and $A(G)$ denote the matching number and  adjacency matrix of $G$, respectively.
The permanental   polynomial of $G$ is defined as $\pi(G,x)={\rm per}(Ix-A(G))$.
The permanental nullity of $G$, denoted by $\eta_{per}(G)$, is the multiplicity
of the zero root of $\pi(G,x)$. In this paper, we use the Gallai-Edmonds structure
theorem to derive a concise formula which reveals the relationship  between the
permanental nullity and the matching number of a graph. Furthermore, we prove a
necessary and
sufficient condition for a graph $G$ to have $\eta_{per}(G)=0$.
As  applications, we show that every  unicyclic graph $G$ on $n$ vertices satisfies
$n-2\nu(G)-1 \le \eta_{per}(G) \le n-2\nu(G)$,
that the permanental nullity of the line graph of a graph
is either zero or one, and that the permanental nullity of a factor critical  graph
is always zero.
\\
\smallskip
\noindent\textbf{AMS classification}: 05C31; 05C50; 15A15\\
\noindent {\bf Keywords:} Permanental polynomial;  Permanental nullity; Matching number
\section{Introduction}

Let $G=(V(G), E(G))$ be a simple graph with $n$  vertices and $m$   edges.
The {\em neighborhood} of vertex $v\in V(G)$ in a graph $G$, denoted by $N_G(v)$
(or just $N(v)$, when $G$ is understood from the context),
is the set of vertices adjacent to $v$. For $T\subseteq V(G)$,
we use $G[T]$ to denote the {\em induced subgraph} of $G$ by $T$.
The {\em line graph} $L(G)$ of $G$ is the graph whose vertex set is $E(G)$, where
two
vertices of $L(G)$ are adjacent in $L(G)$ if and only if
the corresponding edges are adjacent in $G$.

A set $M$ of edges in $G$ is {\em a matching} if every vertex of
$G$ is incident with at most one edge in $M$. For two matchings $M$ and $N$,
the {\em symmetric difference} of $M$ and
$N$ is defined to be
$M\triangle N = (M\cup N)-(M\cap N)$.
A vertex $v$ said to be  {\em covered} (or {\em saturated}) by $M$ if some edge of $M$
is incident with $v$. A {\em maximum matching} is one which covers as many vertices as
possible. In particular, a  maximum matching covering all vertices of $G$ is called a
{\em perfect matching}. A {\em near-perfect} matching in a graph $G$ is one covering
all but exactly one vertex of $G$.
The size of a maximum matching in $G$ is called the {\em matching number} of $G$
and is denoted by $\nu(G)$. A graph $G$ is said to be {\em factor-critical} if
$G-v$ has a perfect matching for every $v\in V(G)$.

\indent
The {\em permanent} of an $n\times n$ matrix $A=(a_{ij})(i,j=1,2,\ldots,n)$ is
defined as $${\rm per}(A)=\sum_{\sigma}\prod_{i=1}^{n}a_{i\sigma(i)},$$
where the sum is taken over all permutations $\sigma$ of $\{1,2,\ldots, n\}$.
Valiant \cite{val}  showed that computing the permanent of a matrix is \#P-complete
even when restricted to (0, 1)-matrices.

For an $n$ by $n$ matrix $A$, define ${\rm per}(xI -A)$ to
be the permanental polynomial of $A$.
If $G$ is a graph and $A(G)$ is the {\em adjacency matrix} of $G$, then
{\em permanental polynomial} of $G$ is defined to be
$\pi(G,x)={\rm per}(xI -A(G))$. That is, the
permanental polynomial of $A(G)$.
The {\em permanental spectrum}  of   $G$, denoted by $ps(G)$,
is the collection of all roots (together with their multiplicities)
of $\pi(G,x)$.
The multiplicity of the zero root of $\pi(G,x)$,
denoted by $\eta_{per}(G)$, is called the
{\em permanental nullity} (per-nullity for short) of $G$.

It seems that the permanental polynomials of graphs were
first considered by Turner \cite{tur}.
Subsequently, Merris et al. \cite{mer1} and Kasum et al. \cite{kas}
systematically introduced permanental polynomial   and its potential
applications in mathematical and chemical studies,  respectively.
Since then, very few research papers on the permanental polynomial were
published for a period of time (see \cite{cas1}).
This may be due to the difficulty of
computing the permanent ${\rm per}(xI-A(G))$. However,
permanental polynomials and their applications have received
a lot of attention from researchers in recent years, as shown in
\cite{bel,bor2,cas2,chen,gee, gut0,gut2,huo,wu1,yan,zhp} and the references therein.

The {\em spectrum} of a graph (i.e., the roots of the characteristic polynomial
of a graph with their multiplicities. See \cite{cve}) encodes useful combinatorial
information of the  graph. The relationship between the structural properties of
a graph and its spectrum
has been studied extensively over the years. Nevertheless, only a few results
on the permanental spectrum have been published. Brenner and Brualdi \cite{bre}
proved the following: If $A$ is an $n$ by $n$ matrix with
nonnegative entries and spectral radius $\rho$, then every root of the permanental
polynomial of $A$ must be in $\{z:|z| \leq \rho\}$.
Merris \cite{mer} observed that if $A$ is hermitian
with eigenvalues $\lambda_{1} \geq \lambda_{2}\geq \ldots\geq \lambda_{n}$,
then each real permanental root of $A$ is
in the interval $[\lambda_{n},\lambda_{1}]$. Borowiecki \cite{boro}
proved that $G$ has $ps(G)= \{i\lambda_{1}, \ldots, i\lambda_{n}\}$ if and only
if $G$ is bipartite without cycles of length $4k$ ($k = 1, 2, \ldots$),
where $i$ is imaginary unit and
$\{\lambda_{1}, \ldots, \lambda_{n}\}$ is the adjacency spectrum  of $G$.
Zhang et al. \cite{zhp1} proved that every
graph does not have a negative real permanental root.
In particular, they showed that a bipartite graph has no real permanental
roots except possibly zero. Additionally, several papers have been published
on graphs uniquely determined by their permanental spectra, see \cite{liu,liu1,wu2,zhp2},
among others.

In \cite{wu}, Wu and Zhang introduced  the per-nullity of a graph,  and
presented some elementary properties of per-nullity.
Furthermore, they   characterized the extremal graphs of order $n$
whose per-nullities are $n-2$, $n-3$,  $n-4$ and  $n-5$, respectively.
It is natural to consider the problem of computing the per-nullity of
graphs. In this paper, we investigate the problem of computing
the per-nullity of graphs, and find a relationship
between per-nullity and matching number of a graph.
The rest of this paper is organized as follows. In Section 2,
we demonstrate some preliminaries on
per-nullity of graphs.  In Section 3, using the Gallai-Edmonds structure theorem,
we obtain a relationship between the  per-nullity and the matching number of a graph.
In Section 4, we determine all graphs with zero per-nullity. In the
last section, we apply our main results to several classes of graphs, including
unicyclic graphs, line graphs and factor critical graphs.

\section{Preliminaries}
A  {\em Sachs graph} is a simple graph such that
each component  is regular and has degree 1 or 2. In
other words, the components are single edges and cycles.
\begin{lemma}\label{art20}{(R. Merris et al. \cite{mer1})}
Let G be a graph with $\pi(G, x)=\sum\limits_{k=0}^{n}b_{k}(G)x^{n-k}$.  Then
$$b_{k}(G)=(-1)^{k}\sum_{H}2^{c(H)}, 1\leq k\leq n,$$
where the sum is taken over all Sachs subgraphs $H$ of $G$ on $k$ vertices, and $c(H)$ is the number of cycles in $H$.
\end{lemma}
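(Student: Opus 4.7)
The plan is to expand $\mathrm{per}(xI-A(G))$ directly from the definition of the permanent and group the non-vanishing terms by the combinatorial structure of the associated permutation. Observe that the $(i,j)$-entry of $xI-A(G)$ equals $x$ if $i=j$, equals $-1$ if $ij\in E(G)$, and equals $0$ otherwise. Hence for a permutation $\sigma$ of $\{1,\dots,n\}$, the product $\prod_{i=1}^n(xI-A(G))_{i,\sigma(i)}$ is nonzero only when for every $i$ with $\sigma(i)\neq i$, the pair $i\sigma(i)$ is an edge of $G$. Decomposing $\sigma$ into disjoint cycles, each fixed point contributes a factor $x$, while a cycle $(i_1\,i_2\,\cdots\,i_r)$ of length $r\geq 2$ forces $i_1i_2,\ i_2i_3,\ \dots,\ i_ri_1$ to be edges and contributes $(-1)^r$.

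Next I would translate this back to subgraphs. A length-$2$ cycle $(i\;j)$ of $\sigma$ corresponds to an edge $ij$ of $G$, and a length-$r$ cycle ($r\geq 3$) corresponds to a cycle $C_r$ of $G$. Thus each non-vanishing $\sigma$ whose non-fixed-point set has size $k$ determines a spanning subgraph $H$ on those $k$ vertices whose components are single edges or cycles, that is, a Sachs subgraph of $G$ on $k$ vertices. Conversely, given such an $H$, each component that is a single edge corresponds to exactly one transposition, while each component that is a cycle of length $\geq 3$ can be realized in exactly two ways (the two directions of traversal). Hence the number of permutations $\sigma$ producing the Sachs subgraph $H$ equals $2^{c(H)}$.

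Finally I would compute the sign. If $H$ has components of sizes $\ell_1,\dots,\ell_t$ with $\sum_{j}\ell_j=k$, then the product of the per-cycle signs is $\prod_j(-1)^{\ell_j}=(-1)^k$. Combining this with the multiplicity count, every Sachs subgraph $H$ on $k$ vertices contributes $(-1)^k 2^{c(H)} x^{n-k}$ to $\mathrm{per}(xI-A(G))$. Summing over all $H$ of size $k$ and comparing with $\sum_{k=0}^n b_k(G)x^{n-k}$ yields the claimed formula for $b_k(G)$.

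The bookkeeping is routine; the only subtle point is the factor $2^{c(H)}$, which must be attributed to the two orientations of each true cycle of length at least three (length-$2$ permutation cycles correspond uniquely to edges and contribute no such factor). Once that correspondence is established cleanly, the sign calculation and the identification with the Sachs expansion fall out immediately, so I do not expect any real obstacle beyond stating the orientation correspondence carefully.
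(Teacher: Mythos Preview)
Your argument is correct and is the standard derivation of the Sachs-type expansion for the permanental polynomial: expand $\mathrm{per}(xI-A)$ over permutations, observe that nonzero contributions correspond to permutations whose nontrivial cycles trace edges and cycles of $G$, and then count the $2^{c(H)}$ orientations of each cycle component while the sign $(-1)^k$ comes from the $k$ off-diagonal $(-1)$ factors.

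Note, however, that the paper does not give its own proof of this lemma; it is quoted from Merris, Rebman and Watkins \cite{mer1} and stated without argument. Your proof is precisely the one found in that reference (and in standard treatments), so there is nothing to compare against in the present paper beyond confirming that your derivation matches the cited source.
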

Let $S(G)$ be a maximum Sachs subgraph  of $G$
(i.e., $S(G)$ has  the maximum number of vertices among all Sachs subgraph  of $G$).
By the definition of a Sachs graph, $S(G)$ has
three possible structures: a maximum  matching,  union of disjoint cycles,
 or  union of some  disjoint single edges and   cycles.
In \cite{wu}, two elementary properties of per-nullity of graphs are introduced as follows.
\begin{lemma}\label{art22}(T. Wu and H. Zhang \cite{wu})
Let $G$ be a simple graph with $n$ vertices.
\\
(i) $\eta_{per}(G)=n$ if and only if $G$ is an empty  graph.
\\
(ii) If  $G_{1}, G_{2},..., G_{t}$ are the connected components
of $G$, then $\eta_{per}(G)=\sum_{i=1}^{t}\eta_{per}(G_{i})$.
\end{lemma}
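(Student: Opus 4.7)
The plan is to address the two parts separately; part (ii) is essentially a consequence of block structure, and part (i) follows from the Sachs-type formula in Lemma \ref{art20}.

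For (ii), after labelling the vertices component by component, the adjacency matrix $A(G)$ is block diagonal with diagonal blocks $A(G_1),\ldots,A(G_t)$, and hence $xI-A(G)$ is block diagonal with blocks $xI-A(G_i)$. The permanent of a block diagonal matrix is the product of the permanents of its diagonal blocks, because any permutation contributing a nonzero monomial in the permanent expansion must permute indices within each block. This yields $\pi(G,x)=\prod_{i=1}^{t}\pi(G_i,x)$, and the multiplicity of $0$ as a root of a product of polynomials equals the sum of the multiplicities of $0$ in the factors, which is exactly (ii).

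For (i), I would invoke Lemma \ref{art20}. Writing $\pi(G,x)=\sum_{k=0}^{n}b_k(G)x^{n-k}$, the condition $\eta_{per}(G)=n$ is equivalent to $\pi(G,x)=x^n$, that is, $b_k(G)=0$ for every $1\le k\le n$. The lemma gives $b_k(G)=(-1)^k\sum_H 2^{c(H)}$ summed over Sachs subgraphs $H$ of $G$ on $k$ vertices; since each term $2^{c(H)}$ is strictly positive, this sum vanishes if and only if $G$ has no Sachs subgraph on $k$ vertices. Hence $\eta_{per}(G)=n$ if and only if $G$ has no Sachs subgraph of positive order. Now if $G$ contained any edge $uv$, then $\{uv\}$ would itself be a Sachs subgraph on two vertices, forcing $G$ to be edgeless. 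Conversely, if $G$ is empty then $A(G)=0$ and $\pi(G,x)={\rm per}(xI)=x^n$, so $\eta_{per}(G)=n$.

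The only mild subtlety is remembering that the permanent does not factor multiplicatively in general; it does, however, factor over block diagonal decompositions for the reason noted above. Apart from this observation, the proof is a mechanical application of Lemma \ref{art20} combined with positivity of the Sachs contributions, so I do not anticipate any serious obstacle.
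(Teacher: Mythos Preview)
Your proof is correct. The paper itself does not supply a proof of this lemma---it is merely quoted from \cite{wu}---so there is no argument in the paper to compare yours against. For the record, your approach (block-diagonality of $xI-A(G)$ giving $\pi(G,x)=\prod_i\pi(G_i,x)$ for (ii), and the Sachs coefficient formula of Lemma~\ref{art20} together with strict positivity of each summand $2^{c(H)}$ for (i)) is the standard and natural one, and the caveat you flag about permanents factoring only over block-diagonal decompositions is exactly the right point to note.
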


\begin{lemma}\label{art23}(T. Wu and H. Zhang \cite{wu})
Let $G$ be a graph with $n$ vertices and $S(G)$ be a maximum Sachs subgraph  of $G$. Then
$\eta_{per}(G)= n-|V(S(G))|$.
\end{lemma}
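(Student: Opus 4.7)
The plan is to read off the multiplicity of $0$ as a root of $\pi(G,x)$ directly from the Sachs-subgraph expansion provided by Lemma \ref{art20}, exploiting the crucial fact that in the permanental case the contributions $2^{c(H)}$ are strictly positive, so no cancellation can occur between Sachs subgraphs of the same size.

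First I would write
\[
\pi(G,x)=\sum_{k=0}^{n}b_{k}(G)\,x^{n-k},
\]
and note that if $k^{*}:=\max\{k:0\le k\le n,\ b_{k}(G)\neq 0\}$, then $x^{n-k^{*}}$ is the exact power of $x$ dividing $\pi(G,x)$, so $\eta_{per}(G)=n-k^{*}$. The problem therefore reduces to identifying $k^{*}$ with $|V(S(G))|$.

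Next I would apply Lemma \ref{art20} to rewrite
\[
b_{k}(G)=(-1)^{k}\sum_{H}2^{c(H)},
\]
where $H$ ranges over all Sachs subgraphs of $G$ on exactly $k$ vertices. The key observation is that $2^{c(H)}\ge 1$ for every such $H$, so the inner sum is a sum of \emph{positive} integers. Consequently $b_{k}(G)\neq 0$ if and only if $G$ possesses at least one Sachs subgraph on $k$ vertices; no sign cancellation can arise, and this is precisely the step that distinguishes the permanental situation from the characteristic-polynomial analog, where signed contributions permit cancellation.

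Combining these two facts, $k^{*}$ equals the largest integer $k$ for which $G$ admits a Sachs subgraph on $k$ vertices, that is $k^{*}=|V(S(G))|$, yielding $\eta_{per}(G)=n-|V(S(G))|$. The only subtlety to check is the boundary case in which $G$ has no edges and hence only the empty Sachs subgraph (so $k^{*}=0$ and $\eta_{per}(G)=n$), which matches Lemma \ref{art22}(i) and so serves as a consistency check rather than an obstacle. I do not expect any genuinely hard step here: the proof is essentially one line once the positivity of the coefficients $2^{c(H)}$ is invoked.
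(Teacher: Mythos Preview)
Your argument is correct: the positivity of the contributions $2^{c(H)}$ in Lemma~\ref{art20} forces $b_k(G)\neq 0$ precisely when $G$ has a Sachs subgraph on $k$ vertices, so the largest such $k$ is $|V(S(G))|$ and the multiplicity of $0$ in $\pi(G,x)$ is $n-|V(S(G))|$. Note, however, that the paper does not supply its own proof of this lemma---it is quoted from \cite{wu} as a known result---so there is no in-paper proof to compare against; your proof is exactly the standard one-line argument one would expect, and it is complete as written.
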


\indent In the following we present the famous Gallai-Edmonds structure theorem
on
matchings of graphs. Definition \ref{BCD}(i) comes from \cite{lov,yu}. The notation of
Definition \ref{BCD} (i) and (ii) will be used throughout this paper.

\begin{Definition} \label{BCD}
Let $G$ be a graph.
\\
(i) Let $D(G)$ be the set of all vertices in $G$
which are not saturated by at least one maximum matching of $G$.
Define $B(G) = \{v \in (V(G)-D(G)) \; :$ there exist a $u \in B(G)$ with $uv \in E(G)\}$.
Finally let $C(G)=V(G)-(D(G) \cup B(G))$.
This yields a vertex-partition of $V(G)$ into
$B(G)$, $C(G)$ and $D(G)$, which is well-defined for every graph and
does not depend on the choices of any maximum matching.
\\
(ii) Let $D_0'(G)$ be the set of all isolated vertices in
$G[D(G)]$ and
$\F(G)$ be the set of components in $G[D(G)]$ each of which has order at least 3.
\end{Definition}
With this partition, the Gallai-Edmonds structure theorem is stated as follows.

\begin{theorem}\label{art25}{(Gallai-Edmonds Structure Theorem \cite{lov,yu})}
Let $G$ be a
graph and let $B(G)$, $C(G)$ and $D(G)$ be the vertex-partition defined above.
Each of the following holds.
\\
(i) The components of the subgraph induced by $D(G)$ are factor-critical.
\\
(ii) The subgraph induced by $C(G)$ has a perfect matching.
\\
(iii) Any maximum matching $M$ of $G$ contains a near-perfect matching
of each component of $G[D(G)]$ and a perfect matching of each component of $G[C(G)]$, and
$M$ matches all vertices of $B(G)$ with vertices in distinct components of $G[D(G)]$.
\\
(iv) The size of maximum matching is $\frac{1}{2}(|V(G)|-c(D(G))+|B(G)|)$,
where $c(D(G))$ denotes the number of components of the graph spanned by $D(G)$.
\end{theorem}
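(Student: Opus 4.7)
The plan is to establish the four parts in order, using Berge's augmenting path theorem (a matching $M$ is maximum iff $G$ contains no $M$-augmenting path) as the principal technical tool. The backbone is a \emph{stability lemma}: for every $u \in B(G)$ one has $D(G-u) = D(G)$, $B(G-u) = B(G) \setminus \{u\}$, and $\nu(G-u) = \nu(G) - 1$. This lemma is proved directly: since $u \in B(G)$ has some neighbor $w \in D(G)$ and no maximum matching can miss $u$ (otherwise $u\in D(G)$), any maximum matching $M$ leaving $w$ unsaturated must saturate $u$; deleting $u$ then gives a matching of $G-u$ of size $\nu(G)-1$, and $\nu(G-u)=\nu(G)$ would put $u$ into $D(G)$, a contradiction. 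Tracking which vertices become newly unsaturated then pins down the updated $D$- and $B$-sets. The stability lemma lets us peel vertices off $B(G)$ one at a time without disturbing the structure of $D(G)$.

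For part (i), fix a component $K$ of $G[D(G)]$ and any $v \in V(K)$. I would show that some maximum matching of $G$ leaving $v$ unsaturated induces a near-perfect matching on $K$. The key observation is that, from the definitions, vertices of $C(G)$ have no neighbors in $D(G)$, so any $M$-alternating path from $v$ can exit $K$ only through $B(G)$; analysing the symmetric difference of two maximum matchings, one missing $v$ and one missing another vertex $v' \in V(K)$, one can force the relevant alternating component to remain inside $K$ (a detour through $B(G)$ would either create an $M$-augmenting path or leave a $C(G)$-vertex unsaturated). Hence $K-v$ has a perfect matching, and $K$ is factor-critical. I expect this rerouting argument to be the main obstacle.

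For parts (ii) and (iii), I would introduce the bipartite auxiliary graph $H$ whose parts are $B(G)$ and the set of components of $G[D(G)]$, where $u \in B(G)$ is joined to $K$ iff $u$ has a neighbor in $V(K)$. Iterating the stability lemma shows $H$ satisfies Hall's condition on the $B(G)$-side: if some $S \subseteq B(G)$ had fewer than $|S|$ neighbors among the $D$-components, then $\nu(G-S) = \nu(G) - |S|$ would leave too many $D$-vertices uncovered, a contradiction. Hall's theorem then supplies a matching of $H$ saturating $B(G)$ and injecting into the components of $G[D(G)]$; combined with factor-criticality from (i), this extends to a maximum matching of $G$ that is near-perfect inside every $D$-component, matches every $B(G)$-vertex to a distinct $D$-component, and therefore leaves $G[C(G)]$ perfectly matched (since $C(G)$-vertices cannot reach $D(G)$ and must be saturated by any maximum matching). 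This proves (ii) and (iii), and (iv) is then pure counting: $\nu(G)=\tfrac{1}{2}|C(G)| + |B(G)| + \tfrac{1}{2}(|D(G)| - c(D(G))) = \tfrac{1}{2}(|V(G)| - c(D(G)) + |B(G)|)$.
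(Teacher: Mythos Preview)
The paper does not prove this theorem at all: Theorem~\ref{art25} is quoted from Lov\'asz--Plummer and Yu--Liu as a known result, with no argument given. There is therefore nothing in the paper to compare your proposal against.

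That said, your outline is the standard route to Gallai--Edmonds and is essentially sound, with one point to watch. Part~(iii) is a statement about \emph{every} maximum matching $M$, but the argument you sketch via Hall's theorem and factor-criticality only \emph{constructs} one maximum matching with the stated structure. To finish (iii) you still need to argue that an arbitrary maximum matching cannot, for example, match a $B(G)$-vertex to $C(G)$, or leave two vertices of the same $D$-component unsaturated, or send two $B(G)$-vertices into the same $D$-component. This follows once (i), (ii) and the counting in (iv) are in hand (each $D$-component has odd order, so at least one vertex is missed; $B(G)\cup C(G)$ is always saturated by definition of $D(G)$; there are no $C(G)$--$D(G)$ edges; and the deficiency count forces exactly one miss per unmatched component and distinct components for the $B$-partners), but your write-up should make that deduction explicit rather than stop at the existence statement.
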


By Theorem \ref{art25}, we obtain the following lemma, which will be used later
in our arguments.

\begin{lemma}\label{art255}
Let $G$ be a graph with $\F(G) \neq \emptyset$ and
without a perfect matching.
If a maximum matching $M$ of $G$ covers as many isolated vertices
in  $G[D(G)]$ as possible, then there must exist at least one
component  of $G[D(G)]$ in $\F(G)$ not covered  by $M$.
\end{lemma}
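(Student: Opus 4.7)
The plan is a contradiction argument. Suppose $M$ is a maximum matching of $G$ covering as many isolated vertices of $G[D(G)]$ as possible, yet every component in $\F(G)$ is covered by $M$; I will produce a maximum matching $M'$ covering one more isolated vertex than $M$, contradicting the choice of $M$.

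First I would unpack the situation using Theorem~\ref{art25}. Parts (i)--(iii) say that $M$ restricts to a near-perfect matching on each component of $G[D(G)]$, to a perfect matching on $G[C(G)]$, and assigns each vertex of $B(G)$ to a vertex in a distinct component of $G[D(G)]$; a component of $G[D(G)]$ is covered by $M$ exactly when it receives such an assignment. Let $\alpha(M)$ and $\beta(M)$ denote the numbers of components in $D_0'(G)$ and $\F(G)$ assigned by $M$, so $\alpha(M)+\beta(M)=|B(G)|$. Part (iv) together with the hypothesis that $G$ has no perfect matching gives $c(D(G))-|B(G)|=|D_0'(G)|+|\F(G)|-|B(G)|\ge 1$, so some component of $G[D(G)]$ is unassigned. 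Under the contradiction hypothesis $\beta(M)=|\F(G)|$, yielding $\alpha(M)=|B(G)|-|\F(G)|$, the unassigned components all lie in $D_0'(G)$; fix one such uncovered vertex $w\in D_0'(G)$.

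Next I would work inside the bipartite auxiliary graph $H$ whose parts are $B(G)$ and the set of components of $G[D(G)]$, with an edge $(b,K)$ whenever $b\in B(G)$ is $G$-adjacent to some vertex of $K$; the $M$-assignment becomes a matching $M_H$ in $H$ saturating $B(G)$. The objective is to construct an $M_H$-alternating path $w,b_1,K_1,b_2,K_2,\dots,b_j,K_j$ in $H$ with each edge $b_iK_i$ in $M_H$, the remaining edges outside $M_H$, and $K_j\in\F(G)$. Swapping matched and unmatched edges along such a path yields a new $B(G)$-saturating matching $M_H'$ in which $w$ is matched and $K_j$ is unmatched, increasing $\alpha$ by one and decreasing $\beta$ by one; translating back via Theorem~\ref{art25} produces a maximum matching $M'$ of $G$ with $\alpha(M')=\alpha(M)+1$, the desired contradiction.

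The main obstacle is verifying that the alternating path to a matched $\F(G)$-component exists. The key structural input is the factor-criticality of $\F(G)$-components from Theorem~\ref{art25}(i): if $K\in\F(G)$ is assigned by $M_H$ to some $b\in B(G)$, then the specific vertex of $K$ matched to $b$ must vary across maximum matchings (because $K\subseteq D(G)$ forces every vertex of $K$ to be unsaturated in some maximum matching), which forces $b$ to have an adjacency into at least one other component of $G[D(G)]$. Iterating this observation along the auxiliary directed graph associated with $M_H$, starting at $w$, generates the required walk; the expansion is controlled by the Hall-type condition implicit in Theorem~\ref{art25}(iii), and the strict inequality $c(D(G))>|B(G)|$ combined with $\F(G)\ne\emptyset$ prevents the reachable set from being confined to $D_0'(G)$-components.
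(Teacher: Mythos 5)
Your overall framework is sound, and it is in fact a cleaner formalization of what the paper does: pass to the auxiliary bipartite graph $H$ on $B(G)$ versus the components of $G[D(G)]$, note that under the contradiction hypothesis every unassigned component is an isolated vertex, and try to swap along an $M_H$-alternating path to cover one more isolated vertex. The setup steps (that $M_H$ saturates $B(G)$, that $c(D(G))>|B(G)|$ forces an uncovered $w\in D_0'(G)$, and that a swap translates back to a maximum matching of $G$ via factor-criticality) are all correct. The genuine gap is at the decisive step: the existence of an $M_H$-alternating path from your \emph{fixed} $w$ to a matched $\F(G)$-component is never actually derived, and the mechanism you offer cannot derive it. In such a path, component-vertices are left by non-matching edges and $B(G)$-vertices are left by their (unique) matching edge. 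Hence extra adjacencies of a vertex $b\in B(G)$ that is matched into an $\F(G)$-component are irrelevant to extending the path: the moment the path reaches such a $b$, following its matching edge already ends the proof, so your ``key structural input'' can never power the iteration. What has to be ruled out is that the reachable set is trapped among $D_0'(G)$-components and their partners, and neither that observation nor ``$c(D(G))>|B(G)|$ plus $\F(G)\neq\emptyset$ plus a Hall-type condition'' rules this out. Concretely, let $G$ consist of a triangle $T=xyz$, vertices $w_1,w_2,w_3$, and $b_1,b_2$ with edges $b_1w_1$, $b_1w_2$, $b_2w_3$, $b_2x$. Then $D(G)=\{w_1,w_2,w_3,x,y,z\}$, $B(G)=\{b_1,b_2\}$, $\F(G)=\{T\}$, and $M=\{b_1w_1,b_2w_3,xy\}$ is a maximum matching satisfying (\ref{max-iso}); yet from the uncovered vertex $w_2$ the only alternating path is $w_2,b_1,w_1$, confined to $D_0'(G)$. (Here the lemma holds because $T$ itself is uncovered, but the example shows your claimed ``prevention of confinement'' is not a valid inference from the conditions you cite; moreover it shows that fixing $w$ in advance is the wrong move, since the alternating path to $\F(G)$ need not start at a prescribed uncovered vertex.) A second, smaller flaw: your parenthetical argument that $b$ must be adjacent to another component ``because the vertex of $K$ matched to $b$ varies'' is a non sequitur, since the variation can occur entirely inside $K$.

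The gap can be closed by running the alternating-path argument from the $\F(G)$-component rather than from $w$. Take $K^*\in\F(G)$, matched by $M_H$ under your hypothesis. Since $V(K^*)\subseteq D(G)$, pick $u\in V(K^*)$ and a maximum matching $M_u$ of $G$ missing $u$; by Theorem \ref{art25}(iii), $M_u$ induces a $B(G)$-saturating matching $N_H$ of $H$, and $N_H$ leaves $K^*$ unmatched (if some vertex of $K^*$ were matched into $B(G)$, then Theorem \ref{art25}(iii) would force $K^*$ to be fully covered by $M_u$, contradicting that $u$ is missed). In $M_H\triangle N_H$, the component $K^*$ is the end of an alternating path $P$ whose other end is an $M_H$-unmatched component $K'$, which under your hypothesis lies in $D_0'(G)$; swapping $M_H$ along $P$ keeps $B(G)$ saturated, matches $K'$, unmatches $K^*$, and leaves all intermediate components matched, so the induced maximum matching of $G$ covers one more isolated vertex of $G[D(G)]$ than $M$ --- the desired contradiction. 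With this replacement your proof is correct, and it is arguably tighter than the paper's own rather terse argument, which reaches the same contradiction through the sets $B_1'(G)$, $B_2'(G)$ and an adjacency claim it does not fully justify.
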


\begin{proof}
Since $G$ does not have a perfect matching,
it follows from Theorem \ref{art25}(iv) that $c(G[D(G)]) > |B(G)|$.
By Theorem \ref{art25}(iii),  $M$ contains a subset $M_{BD}$ which
matches $B(G)$ with vertices in distinct components of $D(G)$.
Let $W \subseteq D(G)$ be the set of vertices covered by $M_{BD}$.
Then $W$ consists of some isolate
vertices in $G[D(G)]$ and some vertices of components each of which
has at least 3 vertices in $G[D(G)]$. If
all vertices in $W$ are taken isolate vertices in $G[D(G)]$,
or if $G[D(G)]$ does not have any isolated vertices,
then the  conclusion follows from the fact that $c(G[D(G)]) > |B(G)|$
and the assumption that $\F(G) \neq \emptyset$.
Let $D_0''(G)$ be a subset of $D_0'(G)$. Define
$B'(G)=\{u\in B(G) ~:$  for some $w \in D(G)$, $uw\in M_{BD}\}$,
$B_1'(G) = \{v \in B'(G)~:$ for some $w \in D_0''(G)\subseteq D_0'(G)$, $vw\in M_{BD}\}$,
and $B_2'(G) = B'(G) - B_1'(G)$. Since the choice of $M$ maximizes $|B_1'(G)|$,
for every vertex $v \in B_2'(G)$, $N_G(v)  \cap (D_{0}'(G) -D_{0}''(G)) = \emptyset$.
It follows from
the definition of $D(G)$ that there must be a vertex in $B'(G)$
adjacent to at least two components in  $G[D(G)]$ that are in $\F(G)$.
The conclusion of the lemma now follows.
\end{proof}

\begin{theorem}\label{art26}{(G. Chartrand et al. \cite{cha})}
Let $G$ be a nontrivial connected graph. Then
the line graph $L(G)$ contains a
perfect matching if and only if $|E(G)| \equiv 0$ (mod 2).
\end{theorem}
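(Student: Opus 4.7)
The plan is to prove both directions of the biconditional. The forward direction is essentially a counting observation: a perfect matching of $L(G)$ partitions $V(L(G))$ into pairs, and since $|V(L(G))|=|E(G)|$, this forces $|E(G)|$ to be even. I would dispatch this in one sentence.

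For the reverse direction, I would first reformulate the conclusion. A perfect matching of $L(G)$ corresponds bijectively to a partition of $E(G)$ into pairs of edges sharing a common endpoint, i.e., a decomposition of $E(G)$ into copies of $P_3$. So the goal becomes: every connected graph $G$ with $|E(G)|$ even admits a $P_3$-decomposition. I would prove this by strong induction on $|E(G)|$. The base cases $|E(G)|\in\{0,2\}$ are immediate (in the latter case $G=P_3$ since $G$ is connected). For the inductive step, I would exhibit two adjacent edges $e_1,e_2$ such that every connected component of $G-\{e_1,e_2\}$ has an even number of edges; applying the induction hypothesis to each nontrivial component and adjoining the pair $\{e_1,e_2\}$ then yields the desired decomposition.

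The heart of the argument, and the main obstacle, is producing such a removable ``cherry'' $\{e_1,e_2\}$. I would handle the tree case first, because there the parity bookkeeping is transparent. Pick a leaf $v$ with unique neighbor $u$; since $|E(G)|\ge 4$, $\deg(u)\ge 2$. For each neighbor $w\neq v$ of $u$, let $T_w$ be the component of $T-uw$ containing $w$. Then $\sum_w |E(T_w)|=|E(G)|-\deg(u)$, and a parity check shows this sum cannot have the parity of ``all $T_w$'s odd,'' so some $T_w$ has an even number of edges. Removing $uv$ and $uw$ leaves three pieces (the isolated vertex $v$, the subtree $T_w$, and the rest), each with even edge count. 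For general $G$ with a cycle, I would argue similarly by picking a spanning tree and a suitably chosen leaf of it, and then verifying that the non-tree edges do not spoil the parity distribution of components in $G-\{e_1,e_2\}$; alternatively, in the Eulerian case one may simply take an Eulerian circuit $e_1e_2\cdots e_m$ and pair consecutive edges, which is adjacent by construction and yields a $P_3$-decomposition immediately since $m$ is even.

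The step I expect to be the most delicate is the non-tree case of the inductive step: removing two adjacent edges from a graph with cycles can split it in ways that cause odd-edged components to appear, and controlling this requires either a careful block/ear analysis or a reduction to an Eulerian-circuit argument (possibly after adding auxiliary edges between pairs of odd-degree vertices to temporarily make $G$ Eulerian, then accounting for the auxiliary edges when extracting the pairing). Everything else, including the base cases and the routing of the inductive hypothesis through the components of $G-\{e_1,e_2\}$, is bookkeeping once the removable cherry is produced.
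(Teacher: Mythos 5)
Your forward direction, the reformulation of the converse as a $P_3$-decomposition problem (pairs of adjacent edges partitioning $E(G)$), the tree case, and the Eulerian case are all correct; in particular the parity count at the neighbor of a leaf is sound. Note that the paper itself offers no proof of this statement---it is quoted from Chartrand, Polimeni and Stewart---so your attempt has to stand on its own, and it almost does.

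The genuine gap is exactly where you flag it: connected graphs that are neither trees nor Eulerian (e.g.\ two triangles joined by a path of length two, with $8$ edges) are covered by neither completed case, and neither of your proposed repairs works as written. The spanning-tree-plus-leaf idea fails because a leaf $v$ of a spanning tree may be incident with non-tree edges of $G$; then deleting the two edges $uv, uw$ does not isolate $v$, and the three-component parity bookkeeping that drove your tree case collapses. The auxiliary-edge idea fails because the number of edges added between pairs of odd-degree vertices can itself be odd (so the Eulerian supergraph has an odd number of edges and consecutive pairing cannot start), and even when it is even, the passages through auxiliary edges need not align with the pairing: a passage can straddle positions $(2i, 2i+1)$, forcing an auxiliary edge to be paired with a real edge, and different passages can demand incompatible offsets. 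The missing idea is to center the cherry at a non-cut vertex rather than delete it at a leaf's neighbor. Take $v$ to be a leaf of any spanning tree $T$; since $T-v$ spans $G-v$, the vertex $v$ is a non-cut vertex of $G$. If $\deg_G(v)$ is even, pair the edges at $v$ into cherries centered at $v$ and recurse on the connected graph $G-v$, whose edge count is still even. If $\deg_G(v)$ is odd, pair all but one edge $uv$ into cherries centered at $v$, and run your bridge/parity argument at $u$ inside $G-v$: either some edge $uw$ at $u$ is a non-bridge of $G-v$, in which case $(G-v)-uw$ is connected with an even number of edges, or every edge at $u$ in $G-v$ is a bridge, in which case the branches $C_w$ hanging off $u$ are pairwise disjoint and the identity $|E(G-v)| = \deg_{G-v}(u) + \sum_w |E(C_w)|$, with $|E(G-v)|$ odd, shows some $C_w$ has an even number of edges; deleting that bridge $uw$ then leaves two even components and the induction closes. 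So your instinct that a parity argument always produces a removable cherry is right, but without relocating the cherry to the non-cut vertex (or some equivalent device) the inductive step does not go through for graphs with cycles.
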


\begin{cor}\label{art27}{(G. Chartrand et al. \cite{cha})}
The line graph $L(G)$ of a nontrivial graph $G$ has a perfect matching if and
only if every component of $L(G)$ has even order.
\end{cor}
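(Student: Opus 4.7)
The plan is to reduce Corollary \ref{art27} to Theorem \ref{art26} by decomposing $G$ into its connected components and translating the parity condition $|E(G)| \equiv 0 \pmod 2$ into the vertex-count condition for $L(G)$.

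First I would note the elementary structural observation that if $G_1, G_2, \ldots, G_k$ are the connected components of $G$ that contain at least one edge (the isolated vertices of $G$ contribute nothing to $L(G)$), then $L(G)$ is the disjoint union $L(G_1) \cup L(G_2) \cup \cdots \cup L(G_k)$. Since each $G_i$ is connected and has at least one edge, each $L(G_i)$ is itself connected, so the $L(G_i)$'s are exactly the connected components of $L(G)$. This gives a bijection between the components of $L(G)$ and the nontrivial components of $G$, and under this bijection the order of the component $L(G_i)$ equals $|E(G_i)|$.

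Next I would prove the two directions. The forward direction is essentially immediate: if $L(G)$ has a perfect matching $M$, then restricting $M$ to any component of $L(G)$ yields a perfect matching of that component, forcing every component to have even order. For the converse, suppose every component of $L(G)$ has even order. By the observation above, this means $|V(L(G_i))| = |E(G_i)|$ is even for each $i$. Applying Theorem \ref{art26} to each nontrivial connected component $G_i$ of $G$, we obtain a perfect matching $M_i$ of $L(G_i)$. The union $\bigcup_{i=1}^{k} M_i$ is then a perfect matching of $L(G)$.

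There is no serious obstacle; the only point that requires a touch of care is verifying that $L(G_i)$ is connected whenever $G_i$ is a connected graph with at least one edge, so that the components of $L(G)$ line up exactly with the nontrivial components of $G$. Once that bookkeeping is in place, Theorem \ref{art26} does all the real work.
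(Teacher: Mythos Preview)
Your argument is correct. The paper itself gives no proof of Corollary~\ref{art27}; it merely quotes the result from Chartrand et al.\ as an immediate corollary of Theorem~\ref{art26}, and your componentwise reduction to Theorem~\ref{art26} is precisely the intended derivation.
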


Let $G$ be a connected graph with at least 3 edges and $|E(G)| \equiv 1$ (mod 2).
Then as $G$ has a spanning tree, $G$ has an edge $e$ such that $G - e$ is either
connected or has two components with one being an isolated vertex.
If $G$ is 2-edge-connected, then
for any $e \in E(G)$, $G-e$ is connected and has an even number of edges.
With these observations, we have the following consequence of
Theorem \ref{art26}.

\begin{theorem}\label{art28}
Let $G$ be a connected graph with at least 3 edges and with $|E(G)| \equiv 1$ (mod 2).
Each of the following holds.
\\
(i) The line graph $L(G)$ contains a near-perfect matching.
\\
(ii) If, in addition,  $G$ is 2-edge-connected, then $L(G)$ is factor-critical.
\end{theorem}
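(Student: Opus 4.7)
The plan is to prove both parts by reducing them to Theorem \ref{art26} applied to $G-e$ for a suitable edge $e \in E(G)$, using the key identification $L(G-e) = L(G) - v_e$, where $v_e$ denotes the vertex of $L(G)$ corresponding to the edge $e$ of $G$.

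For part (i), I would use the observation made in the paragraph just before the theorem: since $G$ is connected and hence has a spanning tree, there is an edge $e \in E(G)$ such that $G - e$ is either connected or has exactly two components, one of which is an isolated vertex. In either case, the unique nontrivial component of $G-e$ is connected and has $|E(G)| - 1$ edges, which is even by hypothesis. Moreover, since $|E(G)| \geq 3$, this nontrivial component has at least $2$ edges, so its line graph is nontrivial. Theorem \ref{art26} then yields a perfect matching $M$ of $L(G-e) = L(G) - v_e$. Reading $M$ inside $L(G)$, it is a matching of $L(G)$ covering all vertices of $L(G)$ except $v_e$, which is exactly a near-perfect matching.

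For part (ii), the argument is even cleaner: I must show that $L(G) - v$ has a perfect matching for every vertex $v \in V(L(G))$. Fix any $v \in V(L(G))$ and let $e \in E(G)$ be the corresponding edge, so $v = v_e$ and $L(G) - v = L(G-e)$. Because $G$ is 2-edge-connected, $G-e$ is connected, and it has $|E(G)| - 1 \geq 2$ edges with $|E(G-e)| \equiv 0 \pmod 2$. Applying Theorem \ref{art26} to $G-e$ produces a perfect matching of $L(G-e) = L(G) - v$. Since $v$ was arbitrary, $L(G)$ is factor-critical by definition.

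I do not anticipate a serious obstacle here; the two preliminary observations recorded in the paragraph preceding the theorem are essentially tailored to make this a direct corollary of Theorem \ref{art26}. The only minor points to check are that the nontrivial component of $G-e$ has at least one edge (so that Theorem \ref{art26} applies to a genuine nontrivial connected graph), which follows from $|E(G)| \geq 3$, and that the identification $L(G-e) = L(G) - v_e$ is used consistently, so that a perfect matching in one object translates to a (near-)perfect matching in the other.
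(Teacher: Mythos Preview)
Your proposal is correct and follows precisely the approach the paper intends: the paper does not write out a separate proof but records the two observations in the paragraph immediately preceding the theorem (existence of a suitable edge $e$ via a spanning tree for (i), and connectedness of $G-e$ for every $e$ under 2-edge-connectivity for (ii)) and then declares the theorem a consequence of Theorem~\ref{art26}. Your write-up simply makes explicit the identification $L(G-e)=L(G)-v_e$ and the routine verifications, which is exactly what the paper leaves to the reader.
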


\section{A relationship between the  per-nullity and the matching number of graphs}

By  Lemma \ref{art22} and by working componentwise,
it suffices to discuss connected graphs in this sections.
We start with a lemma.

\begin{lemma}\label{art30}
Let $G$ be a factor-critical graph with $n \geq 3$ vertices.
Each of the following holds.
\\
(i) Every vertex $v\in V(G)$ lies in an odd cycle of $G$.
\\
(ii) There exist an odd cycle $C$ and a maximum
matching $M$ of $G$
such that $G$ is covered by $E(C) \cup M$ and such that
the maximum degree of $G[E(C) \cup (M - E(G[V(C)]))]$ is 2. (Thus
$G[E(C) \cup M]$ is a maximum Sachs subgraph of $G$.)
\end{lemma}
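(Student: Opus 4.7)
The plan is to exploit the factor-critical structure via symmetric differences of near-perfect matchings, a standard device for producing alternating paths of even length that, upon adjoining one extra edge, become odd cycles. Both parts of the lemma will be extracted from the \emph{same} pair of near-perfect matchings, with (ii) amounting to a bookkeeping refinement of the argument used for (i).

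For (i), fix $v \in V(G)$. A factor-critical graph is necessarily connected (otherwise picking vertices in different components yields a parity contradiction on $|V(G)|$, which must be odd), so $v$ has at least one neighbor $u$. Let $M_u$ and $M_v$ be near-perfect matchings of $G$ missing $u$ and $v$ respectively. In $M_u \triangle M_v$, vertex $u$ has degree $1$ with its incident edge in $M_v$, vertex $v$ has degree $1$ with its incident edge in $M_u$, and every other vertex has degree $0$ or $2$. Hence the component containing $u$ is a path $P = u_0 u_1 \cdots u_k$ from $u_0 = u$ to $u_k = v$ whose edges $e_i = u_{i-1}u_i$ strictly alternate between $M_v$ at odd positions and $M_u$ at even positions. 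The endpoint conditions force $k$ to be even, and $k \geq 2$ since $u \ne v$. The edge $uv$ lies in neither $M_u$ nor $M_v$ (since $u$ and $v$ are unmatched in the respective matchings), so $uv \notin E(P)$; therefore $C := P + uv$ is an odd cycle through $v$.

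For (ii), keep the same $P$ and $C$. The odd-indexed edges $e_1, e_3, \ldots, e_{k-1}$ lie in $M_v$ and collectively cover $u_0, u_1, \ldots, u_{k-1}$, i.e.\ $V(C) \setminus \{v\}$. Deleting them from $M_v$ therefore leaves a matching $M'$ whose covered vertex set is exactly $V(G) \setminus V(C)$; that is, $M'$ is a perfect matching of $G - V(C)$. Set
\[
M := M' \cup \{e_2, e_4, \ldots, e_k\},
\]
where the even-indexed edges of $P$ belong to $M_u$ and form a near-perfect matching of $V(C)$ missing only $u$. Then $|M| = (n-k-1)/2 + k/2 = (n-1)/2$, so $M$ is a maximum matching of $G$. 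Since $\{e_2, \ldots, e_k\} \subseteq E(C) \subseteq E(G[V(C)])$, we have $M - E(G[V(C)]) = M'$; each vertex of $V(C)$ therefore gets degree $2$ from $E(C)$ and $0$ from $M'$, while each vertex outside $V(C)$ gets degree $0$ from $E(C)$ and $1$ from $M'$. This simultaneously shows that $E(C) \cup M$ covers $V(G)$ and that $G[E(C) \cup (M - E(G[V(C)]))]$ has maximum degree $2$; moreover $G[E(C) \cup M]$, consisting of the cycle $C$ together with the edges of $M'$, is a Sachs subgraph on all $n$ vertices, hence maximum.

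The one step that requires real care is the alternation bookkeeping along $P$: one must verify that the edges of $M_v$ lying inside $V(C)$ are precisely $\{e_1, e_3, \ldots, e_{k-1}\}$, so that the ``leftover'' of $M_v$ outside $V(C)$ is exactly a perfect matching of $G - V(C)$. Once this is in hand, the construction of $M$ in (ii) and all subsequent degree checks are immediate; the factor-critical hypothesis enters only to furnish the two matchings $M_u$ and $M_v$ (and, implicitly, to guarantee connectedness of $G$).
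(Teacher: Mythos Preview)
Your proof is correct and follows essentially the same approach as the paper: take a neighbor $u$ of $v$, form the symmetric difference $M_u \triangle M_v$ of the two near-perfect matchings to obtain an even alternating $u$--$v$ path $P$, and close it with the edge $uv$ to get the odd cycle $C$. Your construction of $M$ in part (ii) differs cosmetically from the paper's (you take $M' \cup \{e_2,\ldots,e_k\}$ while the paper simply uses $M_v$), but since $M - E(G[V(C)]) = M_v - E(G[V(C)]) = M'$ in both cases, the Sachs subgraph produced is identical and your extra bookkeeping only makes the verification more explicit.
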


\begin{proof}
Since $G$ is a factor-critical graph, $|V(G)|$ is odd, and $G$ is connected
and not a bipartite graph. If $G$ is an odd cycle,
then  Lemma \ref{art30} is obvious. Thus, suppose that
$G$ is not a cycle below.

Let $uv$ be an edge of $G$. Since $G$ is factor-critical, $G-v$ has
a perfect matching $M_{v}$. Similarly, $G-u$  has a perfect matching $M_{u}$.
It follows that the symmetric difference
$M_{u}\triangle M_{v}$ contains exactly
one path $P$ of even length joining $u$ and $v$. By the
choices of $M_u$ and $M_v$, the edge $uv$ is not
in $P$, and so $C = P+uv$ is an odd containing $v$.
Since $M_{v}$ covers all vertices of $G - v$,
$E(C) \cup (M_v - E(G[V(C)]))$ is a cover of $G$. By the definition
of $M_v$, no edge in $M_v - E(G[V(C)])$ is incident with
an edge in $C$, and so the maximum degree of $G[E(C) \cup (M - E(G[V(C)]))]$ is 2.
This completes the proof of the lemma.
\end{proof}

\begin{lemma}\label{art31}
Let $G$ be a connected graph with  $n$ vertices
and with the size of a maximum matching being $\nu(G)$. The
following are equivalent.
\\
(i)  $\eta_{per}(G)= n-2\nu(G)$.
\\
(ii) Either $G$ has a perfect matching or $E(G[D(G)]) = \emptyset$.
\end{lemma}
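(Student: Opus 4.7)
The plan is to reformulate, via Lemma \ref{art23}, the identity $\eta_{per}(G) = n - 2\nu(G)$ as $|V(S(G))| = 2\nu(G)$, where $S(G)$ is a maximum Sachs subgraph. Since any maximum matching of $G$ is a Sachs subgraph, $|V(S(G))| \geq 2\nu(G)$ always holds, so the task reduces to determining when every Sachs subgraph has at most $2\nu(G)$ vertices.

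For the direction (ii)$\Rightarrow$(i), the perfect matching case is immediate from $|V(S(G))| \leq n = 2\nu(G)$. When $E(G[D(G)]) = \emptyset$, Theorem \ref{art25}(iv) yields $2\nu(G) = 2|B(G)| + |C(G)|$, and I will bound an arbitrary Sachs subgraph $S$ by partitioning $V(S)$ into its intersections with $B(G)$, $C(G)$, $D(G)$. The crucial step is a counting inequality $|V(S)\cap B(G)| \geq |V(S)\cap D(G)|$: since $D(G)$ is independent in $G$ and its neighbors outside $D(G)$ lie in $B(G)$, every $S$-edge at a $D$-vertex ends in $B(G)$. Matched $D$-vertices inject into the $B(G)$-vertices matched in $S$, while $D$-vertices lying on an $S$-cycle must be separated along the cycle by distinct $B(G)$-vertices; combining these two disjoint injections yields the inequality, hence $|V(S)| \leq 2|B(G)| + |C(G)| = 2\nu(G)$.

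For the contrapositive of (i)$\Rightarrow$(ii), assume $G$ has no perfect matching and $E(G[D(G)]) \neq \emptyset$, so $\F(G) \neq \emptyset$. Lemma \ref{art255} furnishes a maximum matching $M$ and a component $H \in \F(G)$ whose $M$-exposed vertex $v$ (supplied by Theorem \ref{art25}(iii)) is also $M$-exposed globally. Since $H$ is factor-critical with $|V(H)| \geq 3$, applying Lemma \ref{art30}(ii) with an edge $uv \in E(H)$ produces an odd cycle $C$ through $v$ and a perfect matching $M_v$ of $H - v$ such that $E(C) \cup (M_v - E(H[V(C)]))$ covers $V(H)$ with maximum degree $2$. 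Replacing the near-perfect matching $M \cap E(H)$ in $M$ by this structure produces a Sachs subgraph of $G$ on $2\nu(G) + 1$ vertices, whence $\eta_{per}(G) \leq n - 2\nu(G) - 1 < n - 2\nu(G)$.

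The main obstacle is the counting inequality $|V(S)\cap B(G)| \geq |V(S)\cap D(G)|$ in the sufficiency direction. It requires splitting the $D$-vertices of $S$ into the matched ones and those on cycles, using independence of $D(G)$ to route all their $S$-edges into $B(G)$, and verifying that cycle $D$-vertices are separated by distinct $B(G)$-vertices along their cycles, so that the two classes of $B(G)$-vertices so identified are disjoint. The necessity direction is essentially bookkeeping, relying on Lemma \ref{art255} to locate an uncovered $\F(G)$-component and on Lemma \ref{art30}(ii) to find an odd cycle incident to the globally exposed vertex, so that swapping a matching for (cycle $+$ matching) gains exactly one vertex.
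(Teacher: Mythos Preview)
Your proof is correct and, for the implication (i)$\Rightarrow$(ii), follows essentially the same route as the paper: both arguments use Lemma~\ref{art255} to locate a component $H\in\F(G)$ whose $M$-exposed vertex is globally exposed, and then invoke Lemma~\ref{art30} to replace the near-perfect matching $M\cap E(H)$ by a spanning Sachs subgraph of $H$, gaining one covered vertex. (A minor remark: you do not actually need the odd cycle to pass through the exposed vertex $v$; any spanning Sachs subgraph of $H$, as guaranteed directly by the statement of Lemma~\ref{art30}(ii), suffices for the replacement. Your insistence on routing $C$ through $v$ relies on the \emph{proof} rather than the statement of that lemma, but it is harmless.)

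Where your argument genuinely differs is in the direction (ii)$\Rightarrow$(i) when $E(G[D(G)])=\emptyset$. The paper simply asserts that in this situation ``every maximum matching of $G$ is a maximum Sachs subgraph of $G$'' and concludes via Lemma~\ref{art23}. You instead give a self-contained counting proof: from Theorem~\ref{art25}(iv) and $c(D(G))=|D(G)|$ you derive $2\nu(G)=2|B(G)|+|C(G)|$, and then for an arbitrary Sachs subgraph $S$ you establish the inequality $|V(S)\cap D(G)|\le |V(S)\cap B(G)|$ by exploiting the independence of $D(G)$ and the fact that $N_G(D(G))\subseteq B(G)$. Your injection argument is sound: degree-$1$ $D$-vertices map to their (degree-$1$) $B$-partners, while on each $S$-cycle the $D$-vertices are non-adjacent and hence each is followed (in a fixed orientation) by a distinct degree-$2$ $B$-vertex; the two image sets are disjoint because they are distinguished by degree in $S$. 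This yields $|V(S)|\le 2|B(G)|+|C(G)|=2\nu(G)$, which is exactly what is needed. In short, your proof supplies the missing justification for a step the paper leaves implicit, and is in that respect more complete.
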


\begin{proof}
Assume (i) to prove (ii). By Lemma \ref{art23}, the equality $\eta_{per}(G)= n-2\nu(G)$
implies that a maximum matching of $G$ is a maximum Sachs subgraph.
By the definition of $D(G)$, we observe that if $D(G) = \emptyset$, then $G$
has a perfect matching, and so (ii) holds. Hence we assume that
$D(G) \neq \emptyset$. Suppose that there exists  at least one
components of $G[D(G)]$ having at least 3 vertices.
By Lemma \ref{art255}, there must be at least one
component of $G[D(G)]$ in $\F(G)$ not covered  by $M$.
Let $\F_M(G)$ denote all such components.
By Lemma \ref{art30}, for each $L \in \F_M(G)$, there exists an odd cycle $C_L$ and
a subset $M_L \subset M \cap E(L)$, such that $E(C_L) \cup M_L$
covers $V(L)$ and such that the maximum degree of $L[E(C_L) \cup M_L]$ is at most 2.
Thus, $H = G[\cup_{\small L \in \F_M(G)} (E(C_L) \cup M_L)]$ is
a Sachs graph $H$ such that  $|V(H)|$ is more than  the number of vertices in $G$
covered by the maximum matching $M$.  This implies that any maximum matching of $G$
is not a maximum Sachs subgraph, contrary to the fact that any
maximum matching must also be a maximum Sachs subgraph.
Therefore, $\F(G) = \emptyset$  and so $E(G[D(G)]) = \emptyset$.

We now assume (ii) to prove (i). If $G$ has a perfect matching, then the perfect matching
is a maximum Sachs subgraph of $G$. By Lemma \ref{art23},
$\eta_{per}(G)= n-2\nu(G)$. Suppose that $G$ does not have a perfect matching
and $E(G[D(G)]) = \emptyset$.
Since every maximum matching of $G$ is a maximum Sachs subgraph of $G$,
it follows by Lemma \ref{art23} that $\eta_{per}(G)= n-2\nu(G)$.
\end{proof}

\begin{Definition} \label{FM}
For a maximum matching $M$ of $G$ such that
\begin{equation} \label{max-iso}
\mbox{ the number of
isolated vertices in $G[D(G)]$ covered by $M$ is maximized,}
\end{equation}
define $M(G)$ to be the number of components of order at
least 3 in $G[D(G)]$ each of which  has just a vertex not covered  by $M$.
\end{Definition}
By Theorem \ref{art25}(i), every graph in $\F(G)$ is factor-critical.
By Lemma \ref{art31}, if $\F(G) \neq \emptyset$, then $\eta_{per}(G)< n-2\nu(G)$.
The next lemma describes the per-nullity of the graphs with $\F(G) \neq \emptyset$.

\begin{lemma}\label{art32}
Let $G$ be a connected graph with $n$ vertices and without a perfect matching,
If  $\F(G) \neq \emptyset$, then
\[
\eta_{per}(G)=n-2\nu(G)-M(G).
\]
\end{lemma}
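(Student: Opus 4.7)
The plan is to prove the equivalent identity $|V(S(G))| = 2\nu(G) + M(G)$ for a maximum Sachs subgraph $S(G)$; Lemma \ref{art23} then delivers $\eta_{per}(G) = n - 2\nu(G) - M(G)$. I will prove the two inequalities separately.

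For the lower bound $|V(S(G))| \geq 2\nu(G) + M(G)$, I would exhibit an explicit Sachs subgraph of this size. Fix a maximum matching $M$ satisfying (\ref{max-iso}). By Theorem \ref{art25}, $M$ decomposes into a perfect matching of $G[C(G)]$, one edge from each $b \in B(G)$ to a vertex in a distinct component of $G[D(G)]$, and a near-perfect matching of every component of $G[D(G)]$. By the definition of $M(G)$ together with Lemma \ref{art255}, exactly $M(G)$ components of $\F(G)$ are unmatched to $B(G)$, and the near-perfect matching $M \cap E(L)$ inside each such $L$ misses a single vertex $v_L$. Applying Lemma \ref{art30}(ii) to $L$ anchored at $v_L$ yields an odd cycle $C_L$ containing $v_L$ together with a subset $M_L' \subseteq M \cap E(L)$ such that $E(C_L) \cup M_L'$ covers $V(L)$ with maximum degree 2. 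Substituting $M \cap E(L)$ by $E(C_L) \cup M_L'$ for each of these $M(G)$ components transforms $M$ into a Sachs subgraph $S$ with $|V(S)| = 2\nu(G) + M(G)$, since each substitution incorporates the one extra vertex $v_L$ into the coverage while preserving the Sachs structure.

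For the upper bound, I would take an arbitrary Sachs subgraph $S$, extract a derived matching $M_S$ by retaining the singleton edges and choosing a maximum matching inside each cycle, and use the identity $|V(S)| = 2|M_S| + c_o(S)$, where $c_o(S)$ is the number of odd cycles in $S$. Under the assumption $|M_S| = \nu(G)$, the freedom to pick the uncovered vertex of each odd cycle's near-matching makes every vertex of every odd cycle of $S$ realizable as unmatched by some maximum matching, so $V(C) \subseteq D(G)$ for every odd cycle $C$ of $S$; by connectivity each $C$ lies in a single component of $\F(G)$. If that component $L$ were matched to $B(G)$ by $M_S$, every vertex of $L$ would be $M_S$-covered, contradicting the presence of a designated $M_S$-uncovered vertex in $V(C)$; so $C$ lies in an unmatched $\F(G)$-component. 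Condition (\ref{max-iso}) minimizes the number of such unmatched components across all maximum matchings, so at most $M(G)$ of them exist, and the GE near-perfect matching inside each such $L$ leaves exactly one $M_S$-uncovered vertex, permitting at most one odd cycle of $S$ per component. Thus $c_o(S) \leq M(G)$ and $|V(S)| \leq 2\nu(G) + M(G)$.

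The main obstacle is disposing of the case $|M_S| < \nu(G)$ in the upper bound. My plan is to show that a maximum Sachs subgraph in fact satisfies $|M_S| = \nu(G)$: otherwise an $M_S$-augmenting path $P$ in $G$ exists, and $S$ can be modified along $P$ to produce a strictly larger Sachs subgraph, contradicting maximality. The straightforward case is when the matching edges of $P$ are all singleton edges of $S$ and both endpoints of $P$ lie outside $V(S)$; the swap (delete the matching edges of $P$, insert the non-matching ones) then produces a Sachs subgraph with two additional vertices. The delicate subcases arise when an endpoint of $P$ is the designated uncovered vertex of a cycle of $S$ (handled by redesignating the uncovered vertex within that cycle to shift it off $P$) or when a matching edge of $P$ lies on a cycle of $S$ (handled by opening the cycle into a path and splicing with $P$ so that parity yields a valid Sachs subgraph). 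With both bounds established, $|V(S(G))| = 2\nu(G) + M(G)$, and Lemma \ref{art23} completes the proof.
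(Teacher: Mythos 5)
Your first half (the lower bound $|V(S(G))|\ge 2\nu(G)+M(G)$) is exactly the paper's argument: fix a maximum matching $M$ satisfying (\ref{max-iso}), use Lemma \ref{art255} to locate the components of $\F(G)$ left with an uncovered vertex, and inside each such component replace the near-perfect matching by an odd cycle plus matching edges via Lemma \ref{art30}. The paper in fact stops essentially there, asserting that the resulting subgraph is a \emph{maximum} Sachs subgraph and that $|S(G)|=2\nu(G)+M(G)$ is ``routine to verify''; your attempt to actually prove that maximality (the upper bound) is where your proposal breaks down.

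The fatal step is your claim that a maximum Sachs subgraph $S$ must have a derived matching $M_S$ with $|M_S|=\nu(G)$. This is false. Let $G$ consist of three disjoint triangles $abc$, $def$, $xyz$ together with the two edges $cd$ and $ax$. Then $n=9$, $\nu(G)=4$ (e.g.\ $\{ax,bc,ef,yz\}$), $G$ has no perfect matching, $D(G)=\{b,d,e,f,x,y,z\}$, $\F(G)$ consists of the triangles $def$ and $xyz$, and $M(G)=1$: the maximum matching $\{ab,cd,ef,yz\}$ covers the unique isolated vertex $b$ of $G[D(G)]$ and leaves only $x$ uncovered. The union of the three triangles is a spanning, hence maximum, Sachs subgraph $S$ with $c_o(S)=3>M(G)$ and $|M_S|=3<4=\nu(G)$; since $S$ is spanning, no augmenting-path surgery can possibly enlarge it, so the contradiction you plan to derive does not exist. (The lemma itself survives: $2|M_S|+c_o(S)=9=2\nu(G)+M(G)$.) Consequently the case $|M_S|<\nu(G)$ of your upper bound is genuinely unresolved: what must be bounded is the combined quantity $2|M_S|+c_o(S)$, not $c_o(S)$ alone after reducing to maximum derived matchings, and that requires a different argument (e.g.\ a deficiency count in the Gallai--Edmonds decomposition applied directly to an arbitrary Sachs subgraph). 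A smaller slip in the same half: condition (\ref{max-iso}) \emph{maximizes}, not minimizes, the number of components of $\F(G)$ left uncovered, because the total number of uncovered vertices equals the constant $n-2\nu(G)$ and each component of $G[D(G)]$ contains at most one of them; so $M(G)$ is indeed an upper bound over all maximum matchings, but your stated justification runs in the wrong direction.
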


\begin{proof}
Let $M$ be a maximum matching of $G$ satisfying (\ref{max-iso}).
Since $G$ does not have a perfect matching,
by Theorem \ref{art25}, $c(D(G))>|B(G)|$.
Then there exists at least one $H \in \F(G)$ such that $H$ has just
a vertex not covered by $M$.
By Lemma \ref{art30}
every $H \in \F(G)$ has an odd cycle $C_H$ such that
$E(E(C_H)) \cup  (M - E(G[V(C_H)]))]$ is a cover of $H$.
It follows that $G$ has a maximum Sachs subgraph $S(G)$ consisting of disjoint
odd cycles $\{C_H: H \in \F(G)\}$, and a subset of $M$.
It is routine to verify that
$|S(G)| = 2\nu(G)+M(G)$, and so
by Lemma \ref{art23}, we have $\eta_{per}(G)=n-2\nu(G)-M(G)$.
\end{proof}

By applying Lemmas \ref{art31} and \ref{art32}, we obtain
the main result of this section. Recall that $D(G)$ and $M(G)$ are defined
in Definitions \ref{BCD} and \ref{FM}, for a given maximum matching $M$ of $G$.

\begin{theorem}\label{art33}
Let $G$ be a connected graph with $n$ vertices,
and let $M$ be a maximum matching of $G$ satisfying (\ref{max-iso}).
Then
\[
\eta_{per}(G)= \left\{
\begin{array}{ll}
n-2\nu(G) & \mbox{ if $G$ has a perfect matching or $\F(G) = \emptyset$,}
\\
n-2\nu(G)-M(G) & \mbox{ otherwise. }
\end{array} \right.
\]
\end{theorem}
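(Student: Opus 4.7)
The plan is to prove Theorem \ref{art33} as a direct case analysis based on the preceding Lemmas \ref{art31} and \ref{art32}, treating the two branches of the stated formula separately.

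First I would handle the ``top'' case, where either $G$ has a perfect matching or $\F(G) = \emptyset$, by reducing it to Lemma \ref{art31}. The subcase in which $G$ has a perfect matching is immediate: condition (ii) of Lemma \ref{art31} clearly holds, so the equivalence in that lemma gives $\eta_{per}(G) = n - 2\nu(G)$. The subcase $\F(G) = \emptyset$ requires a short extra observation. By Theorem \ref{art25}(i) every component of $G[D(G)]$ is factor-critical and so has odd order; since by the definition of $\F(G)$ none of these components has order at least $3$, each must be a single isolated vertex. Hence $E(G[D(G)]) = \emptyset$, condition (ii) of Lemma \ref{art31} is again satisfied, and we conclude $\eta_{per}(G) = n - 2\nu(G)$.

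Next I would handle the ``otherwise'' case, where $G$ has no perfect matching and $\F(G) \neq \emptyset$. Here the hypothesis matches the setup of Lemma \ref{art32} verbatim, so invoking that lemma with the chosen maximum matching $M$ satisfying (\ref{max-iso}) gives $\eta_{per}(G) = n - 2\nu(G) - M(G)$.

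These two cases exhaust the possibilities, and combining them yields the piecewise formula in the theorem. The only step beyond a mechanical citation is the observation that $\F(G) = \emptyset$ together with the factor-criticality of the components of $G[D(G)]$ forces $G[D(G)]$ to be edgeless, which is where Theorem \ref{art25}(i) enters. I do not anticipate a real obstacle: the structural work has already been done in Lemmas \ref{art30}--\ref{art32}, and the present theorem is essentially a clean restatement that separates the ``easy'' and ``corrective'' regimes of the per-nullity formula.
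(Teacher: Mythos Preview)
Your proposal is correct and follows exactly the paper's approach: the paper derives Theorem \ref{art33} simply ``by applying Lemmas \ref{art31} and \ref{art32}'', and your case split does precisely that. The only additional content you supply---that $\F(G)=\emptyset$ together with Theorem \ref{art25}(i) forces $E(G[D(G)])=\emptyset$---is the one-line bridge needed to match the hypothesis of Lemma \ref{art31}, and it is correct.
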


%

\section{The graphs with zero per-nullity}

For a simple graph $G$ on $n$ vertices, it is known that
$0\leq \eta_{per}(G)\leq n-2$. In this section, we will characterize the
graphs  with zero per-nullity. Note that by Lemma \ref{art23},
\begin{equation} \label{0-per}
\mbox{
$\eta_{per}(G) = 0$ if and only if $G$ has a spanning Sachs subgraph. }
\end{equation}

\begin{theorem}\label{art35}
Let $n \ge 2$ be an integer, $G$ be a connected graph on $n$ vertices.
Then $\eta_{per}(G)=0$ if and only if one of the following holds:
\\
(i) $G$ has a perfect matching, or
\\
(ii)  $G[D(G)]$ has no isolated vertices, or
\\
(iii) $G[D(G)]$ has  isolated vertices and
$G$ has a maximum matching covering every isolated vertices of $G[D(G)]$.
\end{theorem}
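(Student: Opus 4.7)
The plan is to reduce Theorem~\ref{art35} to Theorem~\ref{art33} after translating the quantity $n-2\nu(G)-M(G)$ into Gallai--Edmonds language. Set $d_0=|D_0'(G)|$ and $f=|\F(G)|$, so $c(D(G))=d_0+f$. Fix a maximum matching $M$ satisfying~(\ref{max-iso}) and let $b_F$ be the number of $M$-edges joining $B(G)$ to a component of $\F(G)$; then the remaining $|B(G)|-b_F$ edges out of $B(G)$ go to $D_0'(G)$ and hence cover exactly that many isolated vertices of $G[D(G)]$. The key preliminary identity is
\[
M(G)=f-b_F.
\]
To see this, by Theorem~\ref{art25}(iii) each $L\in\F(G)$ has $M\cap E(L)$ a near-perfect matching of $L$ missing exactly one vertex $v_L$; since $B(G)$-vertices are matched to vertices in distinct components of $G[D(G)]$ and every vertex of $L$ other than $v_L$ is already $M$-saturated internally, any $B(G)$-edge of $M$ entering $L$ must be incident to $v_L$. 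Thus $L$ has exactly one vertex uncovered by $M$ iff no $B(G)$-edge of $M$ reaches $L$, and $f-b_F$ counts precisely those $L$. Combining this with Theorem~\ref{art25}(iv),
\begin{equation*}
n-2\nu(G)-M(G)=(d_0+f)-|B(G)|-(f-b_F)=d_0-\bigl(|B(G)|-b_F\bigr),
\end{equation*}
which is the number of isolated vertices of $G[D(G)]$ \emph{not} covered by $M$.

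With this identity in hand, the forward direction is short. If $\eta_{per}(G)=0$ and $G$ has a perfect matching, (i) holds. Otherwise, Theorem~\ref{art33} forces $\F(G)\neq\emptyset$ (else $\eta_{per}(G)=n-2\nu(G)>0$), and $\eta_{per}(G)=0$ then means that $M$ covers every isolated vertex of $G[D(G)]$. If $d_0=0$ this is (ii); if $d_0>0$ the matching $M$ witnesses (iii). For the reverse direction I would verify each hypothesis. Hypothesis (i) yields $\eta_{per}(G)=n-2\nu(G)=0$ directly from Theorem~\ref{art33}. Under (ii), $d_0=0$: if $\F(G)=\emptyset$ as well then $D(G)=\emptyset$ and $G$ has a perfect matching, reducing to (i); if $\F(G)\neq\emptyset$ then $|B(G)|-b_F\le d_0=0$ forces $b_F=|B(G)|$, and the identity gives $\eta_{per}(G)=0$. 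Under (iii), any maximum matching covering every isolated $D$-vertex achieves the maximum in~(\ref{max-iso}), so $M$ does too, and the identity again yields $\eta_{per}(G)=0$.

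The main obstacle I expect is the preliminary identity $M(G)=f-b_F$: it hinges on Theorem~\ref{art25}(iii) to show simultaneously that $M\cap E(L)$ is a near-perfect matching of each $L\in\F(G)$ and that any $B(G)$-edge of $M$ entering $L$ must land on the unique uncovered vertex $v_L$. Once this bridge between $M(G)$ and the covering pattern of $M$ on $D_0'(G)$ is in place, Theorem~\ref{art33} does the rest of the work.
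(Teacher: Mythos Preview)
Your proof is correct and takes a somewhat different route from the paper's. The paper handles the implication $(\text{i})\lor(\text{ii})\lor(\text{iii})\Rightarrow\eta_{per}(G)=0$ by explicitly building a spanning Sachs subgraph in each case: for (ii) and (iii) it invokes Lemma~\ref{art30} to cover each factor-critical component of $G[D(G)]$ by an odd cycle plus matching edges, and glues this to edges of a maximum matching; the converse is then derived from Theorem~\ref{art33} in a more ad hoc way. You instead extract a single combinatorial identity from Theorem~\ref{art33} and Theorem~\ref{art25}(iii)--(iv), namely that $n-2\nu(G)-M(G)$ equals the number of isolated vertices of $G[D(G)]$ missed by $M$, and then both directions become almost immediate bookkeeping. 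This is cleaner and more uniform; the paper's approach, on the other hand, keeps the Sachs-subgraph picture explicit and does not rely on the preliminary identity $M(G)=f-b_F$.

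One small point worth making explicit in your treatment of (iii): you apply ``the identity'' for $\eta_{per}(G)$, which is the second branch of Theorem~\ref{art33} and formally requires $\F(G)\neq\emptyset$. This does follow from the hypotheses of (iii)---if $\F(G)=\emptyset$ then $c(D(G))=d_0>|B(G)|$ by Theorem~\ref{art25}(iv), so no maximum matching can cover all $d_0$ isolated vertices---but it would be cleaner either to state this, or to note once that $\eta_{per}(G)=n-2\nu(G)-M(G)$ holds in \emph{all} cases of Theorem~\ref{art33} (since $M(G)=0$ whenever $\F(G)=\emptyset$), which removes the need for any such check.
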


\begin{proof}
Assume first that $G$ satisfies one of (i), (ii) and (iii). We are to show that
$\eta_{per}(G)=0$.

If (i) holds, then by Theorem \ref{art33}, $\eta_{per}(G)=0$.
Hence we may assume that $G$ has no perfect matchings, and so $|D(G)| > 0$.

Suppose (ii) holds.
Then $|\F(G)| = c(D(G))$.
By Lemma \ref{art30}, each $H \in \F(G)$ has
a Sachs subgraph $S(H)$ to cover $V(H)$. Let $M$ be a perfect matching of
$G[C(G)]$. Then $S(G)= M \cup (\cup_{H \in \F(G)} E(S(H)))$ is
a spanning Sachs subgraph of $G$, and so by (\ref{0-per}),
$\eta_{per}(G)= n - |V(S(G))| = 0$.

Finally, we assume that $G$ has a maximum matching $M$
covering every isolated vertices of $G[D(G)]$.
By Lemma \ref{art30}, a Sachs subgraph exists to cover every graph
in $\F(G)$. Edges in these Sachs subgraphs in the graphs of $\F(G)$ together
with a subset of $M$ induces a spanning Sachs subgraph of $G$.
By (\ref{0-per}),
$\eta_{per}(G)= n - |V(S(G))| = 0$.

Conversely, we assume that  $\eta_{per}(G)=0$ to
show (i) or (ii) or (iii) must occur.
Choose a maximum matching $M$ satisfying (\ref{max-iso}).
By Theorem \ref{art33}, if $M(G) = 0$, then the assumption $\eta_{per}(G)=0$ leads to
$|V(G)|=2\nu(G)$. In this case, $G$ has a perfect matching, and so (i) follows.
Hence we assume that $M(G) > 0$.
By Lemma \ref{art32}, $|V(G)|=2\nu(G)+M(G)$. This implies that either all components of order 1
in $G[D(G)]$ is covered by $M$, whence (ii) holds;  or every component of $G[D(G)]$ is in $\F(G)$,
whence (iii) follows.
This completes the proof of the theorem.
\end{proof}

\section{Some  applications}

In this section, we determine the per-nullity
of some classes of graphs as applications of Theorems \ref{art33} and \ref{art35}.
An {\em unicyclic} graph is a connected graph with
equal number of vertices and edges. The theorem below determines
the per-nullity of unicyclic graphs.

\begin{theorem}\label{art36}
Let $G$ be an unicyclic graph with $n$ vertices and the
unique cycle in $G$ is denoted by $C_{\ell}$. Then
\begin{equation*}
\eta_{per}(G)=
\begin{cases}
n-2\nu(G)-1 & \text{if  $\ell$ is odd and $\nu(G)=\frac{\ell-1}{2}+\nu(G-C_{\ell})$,}\\
n-2\nu(G)& \text{otherwise}.
\end{cases}
\end{equation*}
\end{theorem}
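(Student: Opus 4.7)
The plan is to apply Theorem \ref{art33} and determine, for a unicyclic graph $G$, both whether $\F(G)=\emptyset$ and, when $\F(G)\neq\emptyset$, the value of $M(G)$. Throughout I read $G-C_\ell$ as the vertex deletion $G-V(C_\ell)$, and I write $T_1,\dots,T_\ell$ for the tree components of $G-E(C_\ell)$, with $v_i\in V(T_i)$.

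The natural dichotomy is on the parity of $\ell$. If $\ell$ is even, then $G$ is bipartite and contains no odd cycle; since every factor-critical graph of order at least $3$ contains an odd cycle by Lemma \ref{art30}(i), it follows that $\F(G)=\emptyset$, so Theorem \ref{art33} gives $\eta_{per}(G)=n-2\nu(G)$, matching the ``otherwise'' branch of the claimed formula. If $\ell$ is odd, $C_\ell$ is the unique odd cycle of $G$, so any component of $G[D(G)]$ lying in $\F(G)$ must contain $V(C_\ell)$; hence $|\F(G)|\le 1$, and $\F(G)\neq\emptyset$ iff $V(C_\ell)\subseteq D(G)$.

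The decisive step is the equivalence
\[
V(C_\ell)\subseteq D(G)\iff \nu(G)=\tfrac{\ell-1}{2}+\nu(G-C_\ell).
\]
For the backward direction, for any $v_i\in V(C_\ell)$ the near-perfect matching of $C_\ell$ missing $v_i$ together with a maximum matching of $G-V(C_\ell)$ gives a maximum matching of $G$ avoiding $v_i$, so $v_i\in D(G)$. For the forward direction, I would first prove a swap lemma: if some $v_k$ satisfies $\nu(T_k)=\nu(T_k-v_k)+1$, then $v_k\notin D(G)$, because any maximum matching of $G$ with $v_k$ uncovered restricts to at most $\nu(T_k)-1$ edges of $E(T_k)$, and replacing that restriction by a maximum matching of $T_k$ (legal since no cycle edge of the matching is incident to the freed vertex $v_k$) strictly enlarges the matching, a contradiction. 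Consequently $V(C_\ell)\subseteq D(G)$ forces $\nu(T_i)=\nu(T_i-v_i)$ for every $i$, after which $|M\cap E(T_i)|\le\nu(T_i-v_i)$ for every matching $M$ of $G$ and every $i$, regardless of whether $v_i$ is covered by $M\cap E(C_\ell)$; combined with $|M\cap E(C_\ell)|\le(\ell-1)/2$, this yields $\nu(G)\le\tfrac{\ell-1}{2}+\nu(G-C_\ell)$, closing the equivalence.

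To assemble the formula: when $\F(G)=\emptyset$ Theorem \ref{art33} gives $\eta_{per}(G)=n-2\nu(G)$, and a short parity check shows that if $G$ has a perfect matching (where $\ell$ must be odd and $n$ even) then $(\ell-1)/2+\nu(G-C_\ell)\le(\ell-1)/2+(n-\ell)/2<n/2=\nu(G)$, so the matching equality cannot hold in any ``otherwise'' instance. When $\F(G)=\{H\}$, the nonemptiness of $D(G)$ rules out a perfect matching of $G$, and Lemma \ref{art255} forces at least one component of $\F(G)$ to be uncovered by any maximum matching satisfying (\ref{max-iso}); thus $1\le M(G)\le|\F(G)|=1$, and Theorem \ref{art33} yields $\eta_{per}(G)=n-2\nu(G)-1$. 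The main obstacle I anticipate is the forward direction of the equivalence, specifically the swap lemma and the subsequent uniform upper bound on arbitrary matchings of $G$ via the tree-cycle decomposition.
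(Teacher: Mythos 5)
Your proof is correct, and it shares the paper's skeleton: both reduce the theorem to Theorem \ref{art33} by showing that, for a unicyclic graph, $\F(G)\neq\emptyset$ holds exactly when $\ell$ is odd and $\nu(G)=\frac{\ell-1}{2}+\nu(G-C_\ell)$, in which case $M(G)=1$. Where you genuinely diverge is in the forward implication of that equivalence. The paper disposes of it in one line by citing Theorem \ref{art25}(iii): once $|\F(G)|=1$ forces $C_\ell$ to be a factor-critical (hence odd) component of $G[D(G)]$, restricting a maximum matching along the Gallai--Edmonds decomposition yields the matching identity; making this fully rigorous actually requires choosing a maximum matching that misses a vertex of $C_\ell$ (so that no edge of the matching leaves $V(C_\ell)$), a detail the paper suppresses. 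You instead argue from first principles on the tree--cycle decomposition: your swap lemma (if $\nu(T_k)=\nu(T_k-v_k)+1$ then $v_k\notin D(G)$, proved by a legitimate exchange since the uncovered vertex $v_k$ meets no matching edge) converts $V(C_\ell)\subseteq D(G)$ into $\nu(T_i)=\nu(T_i-v_i)$ for all $i$, after which the edge-class count $|M|\le\frac{\ell-1}{2}+\sum_i\nu(T_i-v_i)$ bounds every matching of $G$. This is longer but elementary, avoids Theorem \ref{art25}(iii) at that step, and fills exactly the gap the paper leaves implicit; the backward directions of the two proofs coincide (near-perfect matching of $C_\ell$ plus a maximum matching of $G-V(C_\ell)$). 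Your write-up is also more careful than the paper's at the points it labels routine: the even-$\ell$ case via bipartiteness and Lemma \ref{art30}(i), the consistency check that a perfect matching is incompatible with the first branch, and the explicit derivation $1\le M(G)\le|\F(G)|=1$ from Lemma \ref{art255} together with Theorem \ref{art25}(iii).
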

\begin{proof}
Since $C_{\ell}$ is the only one cycle in $G$, it is routine
to see that only $C_{\ell} \in \F(G)$ is a factor-critical component.
By Theorem \ref{art33}, we have
$n-2\nu(G)-1 \le \eta_{per}(G) \le n-2\nu(G)$.

If $\eta_{per}(G)=n-2\nu(G)-1$, then by  Theorem \ref{art33},
there exists a maximum matching $M$ of $G$ satisfying (\ref{max-iso}),
and $|\F(G)| = 1$. Since $G$ is an unicyclic graph, the factor-critical
component of $G$ must be $C_{\ell}$. This implies that $C_{\ell}$ is odd.
By $(iii)$ of Theorem \ref{art25}, we have $\nu(G)=\frac{\ell-1}{2}+\nu(G-C_{\ell})$.

Assume that $C_{\ell}$ is odd and $\nu(G)=\frac{\ell-1}{2}+\nu(G-C_{\ell})$.
Then  $C_{\ell}$ is factor-critical, and there exists a maximum matching covering
all vertices of $C_{\ell}$ excepting a vertex. It follows from Theorem \ref{art33}
that $\eta_{per}(G)=n-2\nu(G)-1$.

By Theorem \ref{art33}, it is routine to verify
that in this case, $\eta_{per}(G)=n-2\nu(G)$ if and only if
$G$ has a perfect matching or if $\F(G) = \emptyset$,
The theorem now follows.
\end{proof}

\begin{theorem}\label{art37}
Let $G$ be an unicyclic graph with a unique cycle $C$.
Then $\eta_{per}(G)=0$ if and only if $G$ is an odd cycle,
$G$ has a perfect matching, or $G-V(C)$ has a perfect matching.
\end{theorem}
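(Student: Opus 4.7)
The plan is to derive the corollary directly from Theorem \ref{art36}, which already pins down $\eta_{per}(G)$ to be one of the two values $n-2\nu(G)$ or $n-2\nu(G)-1$ for an unicyclic graph. I will translate the equation $\eta_{per}(G)=0$ through each branch of that formula, convert the resulting numerical identities into the structural conditions in the statement, and, for the converse, explicitly exhibit matchings of the required size.

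For the forward direction, suppose $\eta_{per}(G)=0$. Under the ``otherwise'' branch of Theorem \ref{art36}, $\eta_{per}(G)=n-2\nu(G)=0$ forces $G$ to have a perfect matching. Under the exceptional branch, $\ell$ is odd, $\nu(G)=\frac{\ell-1}{2}+\nu(G-C_{\ell})$, and $\eta_{per}(G)=n-2\nu(G)-1=0$. Substituting the matching identity and using $n=\ell+|V(G)-V(C)|$, I get $|V(G)-V(C)|=2\nu(G-C_{\ell})$, so $G-V(C)$ has a perfect matching; the degenerate case $V(G)=V(C)$ is exactly ``$G$ is an odd cycle''.

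For the converse, I handle each of the three sufficient conditions separately. If $G$ has a perfect matching, Theorem \ref{art36} immediately gives $\eta_{per}(G)=n-2\nu(G)=0$. If $G$ is an odd cycle, then $G$ is itself a spanning Sachs subgraph and Lemma \ref{art23} yields $\eta_{per}(G)=0$. Finally, suppose $G-V(C)$ has a perfect matching and split by the parity of $\ell$. When $\ell$ is even, concatenating a perfect matching of $C_{\ell}$ with the given one produces a perfect matching of $G$ and reduces to the first case. When $\ell$ is odd, $n=\ell+|V(G)-V(C)|$ is odd; the $(\ell-1)/2$ edges of a near-perfect matching of $C_{\ell}$ together with the perfect matching of $G-V(C)$ form a matching of size $(\ell-1)/2+\nu(G-C_{\ell})=(n-1)/2$, and since $\nu(G)\le \lfloor n/2\rfloor=(n-1)/2$, equality forces $\nu(G)=\frac{\ell-1}{2}+\nu(G-C_{\ell})$. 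Theorem \ref{art36} then delivers $\eta_{per}(G)=n-2\nu(G)-1=0$.

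I do not foresee a real obstacle here, since Theorem \ref{art36} does most of the work. The main care is keeping the two parity cases of $\ell$ straight in the converse and observing that the statement's three clauses are not mutually exclusive: the ``$G$ is an odd cycle'' conclusion is in fact captured by ``$G-V(C)$ has a perfect matching'' once we allow the vacuous case $V(G)-V(C)=\emptyset$, and the authors list it separately only for clarity.
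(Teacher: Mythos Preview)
Your argument is correct and takes a genuinely different route from the paper. The paper derives the result from Theorem~\ref{art35} (the general characterization of zero per-nullity via the Gallai--Edmonds decomposition) together with the observation~(\ref{0-per}) that $\eta_{per}(G)=0$ is equivalent to the existence of a spanning Sachs subgraph: for the ``if'' direction it simply notes that $E(C)\cup M_C$ is a spanning Sachs subgraph, and for the ``only if'' direction it argues structurally that $C$ must be the unique factor-critical component of $G[D(G)]$ and then invokes Theorem~\ref{art25}(iii). You instead go through Theorem~\ref{art36}, reducing everything to arithmetic with $n$, $\ell$, $\nu(G)$, and $\nu(G-C_\ell)$. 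Your route is arguably tidier, since Theorem~\ref{art36} has already absorbed the unicyclic-specific structure, and you avoid re-examining the Gallai--Edmonds partition. One small point worth making explicit: when you say ``If $G$ has a perfect matching, Theorem~\ref{art36} immediately gives $\eta_{per}(G)=n-2\nu(G)=0$'', you are implicitly using that the exceptional branch cannot apply (indeed, it would force $|V(G)-V(C)|=2\nu(G-C_\ell)-1$, which is impossible); a one-line remark, or simply the observation that $\eta_{per}(G)\ge 0$ forces the value $0$ once $n=2\nu(G)$, would close this cleanly.
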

\begin{proof}
By (i) and (ii) of Theorem \ref{art35},
it is routine to verify that if $G$ is an odd cycle or $G$ has a perfect matching,
then $\eta_{per}(G)=0$. Thus we assume that
$G$ is not an odd cycle and $G$ does not have a perfect matching, and
that $G-V(C)$ has a perfect matching $M_C$.
Then $E(C) \cup M_C$ is a spanning Schas subgraph of $G$, and
so by (\ref{0-per}), $\eta_{per}(G)=0$.

Conversely, assume that  $\eta_{per}(G)=0$, and that
$G$ is not an odd cycle and $G$ does not have a perfect matching.
We are to show that $G-V(C)$ has a perfect matching.
By contradiction, suppose that $G - V(C)$ does not have a
perfect matching.  By Theorem \ref{art35}, (ii) or (iii)
of Theorem \ref{art35} must hold.
Since $G$ is an unicyclic graph, the cycle $C$ of $G$ must be the only
factor-critical  component order at least 3 in $G[D(G)]$. Hence
$|V(C)|$ is odd. If Theorem \ref{art35} (ii) holds, then $G$ must be an odd cycle,
contrary to the assumption that $G$ is not an odd cycle.
Hence Theorem \ref{art35} (iii) must hold.
By Theorem \ref{art25} (iii), $G-V(C)$ has a perfect matching.
This completes the proof of the theorem.
\end{proof}

By Theorems \ref{art26}, \ref{art28} and \ref{art33}, we obtain the following results.
\begin{theorem}\label{art34}
Let $L(G)$ be the line graph of $G$. Then
the per-nullity of $L(G)$ equals zero  or one.
\end{theorem}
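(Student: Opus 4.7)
The plan is to split the argument on the parity of $|E(G)|$, using Theorem \ref{art26} and Theorem \ref{art28}(i) as the matching-existence inputs and Theorem \ref{art33} as the bridge from matching data to per-nullity. Note that $|V(L(G))|=|E(G)|$, and since per-nullity is additive over components by Lemma \ref{art22}(ii), it is natural to read the statement under the standing assumption that $G$ (equivalently $L(G)$) is connected, so that the claim $\eta_{per}(L(G))\in\{0,1\}$ is meaningful.

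First I would handle the even case. If $|E(G)|$ is even and $G$ is connected and nontrivial, Theorem \ref{art26} supplies a perfect matching of $L(G)$, so $\nu(L(G))=|V(L(G))|/2$, and the first clause of Theorem \ref{art33} yields $\eta_{per}(L(G))=|V(L(G))|-2\nu(L(G))=0$.

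For the odd case, suppose $|E(G)|$ is odd. If $|E(G)|=1$, then $L(G)$ is a single isolated vertex and Lemma \ref{art22}(i) gives $\eta_{per}(L(G))=1$. Otherwise $|E(G)|\ge 3$ and Theorem \ref{art28}(i) produces a near-perfect matching in $L(G)$; since $|V(L(G))|$ is odd there is no perfect matching, forcing $\nu(L(G))=(|V(L(G))|-1)/2$, whence $|V(L(G))|-2\nu(L(G))=1$. I then invoke Theorem \ref{art33}: either the first branch gives $\eta_{per}(L(G))=1$, or the second branch gives $\eta_{per}(L(G))=1-M(L(G))$, where $M(L(G))$ from Definition \ref{FM} counts certain factor-critical components of $L(G)[D(L(G))]$, each contributing at least one uncovered vertex, so $M(L(G))\le|V(L(G))|-2\nu(L(G))=1$. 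In either branch $\eta_{per}(L(G))\in\{0,1\}$.

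The main obstacle is the odd case, and more specifically the bound on $M(L(G))$: once one observes that the deficit $|V(L(G))|-2\nu(L(G))$ is already $1$, Definition \ref{FM} forces $M(L(G))\in\{0,1\}$, which is exactly what pins the per-nullity down. Everything else is a straightforward invocation of Theorems \ref{art26}, \ref{art28}(i), and \ref{art33}.
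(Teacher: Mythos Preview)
Your proof is correct and follows essentially the same route as the paper, which merely cites Theorems \ref{art26}, \ref{art28} and \ref{art33} without spelling out the details; you have supplied those details, including the observation that the statement tacitly requires $G$ (equivalently $L(G)$) to be connected and the bound $M(L(G))\le |V(L(G))|-2\nu(L(G))$ that pins down the odd case.
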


\begin{theorem}
Let $G$ be a  factor-critical graph. Then $\eta_{per}(G)=0$.
\end{theorem}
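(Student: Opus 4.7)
The plan is to deduce the statement as an immediate consequence of Theorem \ref{art35}. The key observation is that, by the very definition of factor-criticality, every vertex of $G$ is missed by some maximum matching, so the Gallai--Edmonds partition degenerates into $D(G)=V(G)$ and $B(G)=C(G)=\emptyset$; in particular $G[D(G)]=G$. Once this is established, verifying one of the three sufficient conditions in Theorem \ref{art35} becomes a purely structural check, and I expect condition (ii) to apply directly.

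First I would handle the trivial case $n=1$ separately (since Lemma \ref{art30} itself explicitly assumes $n\ge 3$, the authors are implicitly working in the regime where factor-critical has the expected meaning); otherwise $|V(G)|$ is odd and at least $3$. A brief component-parity argument shows that a factor-critical graph on $\ge 2$ vertices must be connected: if the components were $G_1,\dots,G_k$ with $k\ge 2$, deleting a vertex from $G_1$ would force every $G_i$ ($i\ge 2$) to have even order, while deleting a vertex from $G_2$ would force $|V(G_1)|$ even, contradicting that at least one $G_i$ must have odd order to accommodate the missing vertex. In particular $G$ has no isolated vertex.

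Next, for each $v\in V(G)$ the perfect matching of $G-v$ guaranteed by factor-criticality is a maximum matching of $G$ that misses exactly $v$, so every vertex lies in $D(G)$. This gives $D(G)=V(G)$, hence $G[D(G)]=G$ is connected on $\ge 3$ vertices and therefore contains no isolated vertex. Condition (ii) of Theorem \ref{art35} now applies and yields $\eta_{per}(G)=0$.

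An alternative, more self-contained route bypasses the Gallai--Edmonds machinery and invokes Lemma \ref{art30}(ii) directly: that lemma already supplies an odd cycle $C$ together with a maximum matching $M$ whose union covers $V(G)$ with maximum degree at most $2$, i.e.\ a spanning Sachs subgraph of $G$; then (\ref{0-per}) immediately gives $\eta_{per}(G)=0$. Either way, the main (and essentially only) obstacle is justifying that $D(G)=V(G)$, or equivalently invoking Lemma \ref{art30}(ii); once that observation is in place, no additional casework is required.
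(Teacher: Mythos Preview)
Your proposal is correct and follows essentially the same route the paper indicates: the paper presents the result as an immediate consequence of Theorem~\ref{art33} (with no further proof), and your argument via Theorem~\ref{art35}(ii)---which rests on the same Gallai--Edmonds computation $D(G)=V(G)$, $\F(G)=\{G\}$---is just the specialization of that theorem to the zero case. Your alternative route through Lemma~\ref{art30}(ii) and (\ref{0-per}) is equally valid and in fact the most self-contained way to see it, since that lemma already hands you a spanning Sachs subgraph directly.
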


\end{document}